\newtheorem{theorem}{Theorem}[section]
\newtheorem{definition}[theorem]{Definition}
\newtheorem{corollary}[theorem]{Corollary}
\newcommand{\Z}{\mathbb Z}
\newcommand{\R}{\mathbb R}
\newcommand{\N}{\mathbb N}
\title{Apportionment Methods}
\author{Horst F. Niemeyer and Alice C. Niemeyer$^1$\\
$^1$ School of Mathematics and Statistics\\The University of\\
  Western Australia\\35 Stirling Highway\\Nedlands, WA 6009\\Australia} 
\begin{document}

\maketitle
AMS 2000 subject classification: 91B12
JEL classification: D72, C02

\begin{abstract}
Most democratic  countries use election methods  to transform election
results into whole numbers which usually give the number of seats in a
legislative body the parties  obtained.  Which election method does this
best can  be specified  by measuring the  error between  the allocated
result  and the  ideal proportion.  We show  how to  find  an election
method  which is  best  suited to  a  given error  function.  We  also
discuss several properties of election methods that have been labelled
paradoxa.  In particular we  explain the highly publicised ``Alabama''
Paradox for  the Hare/Hamilton  method and show  that other  popular election
methods come with their very own paradoxa.
\end{abstract}

\section{Introduction}\label{sec:intro}

Practically  all democratic countries  are faced  with the  problem of
selecting members  of their legislative  bodies according to  votes of
their   population.   The  method   by   which   this  selection   of
representatives is  performed is  commonly known as  an \emph{election
method}. Its main function is  to transform the election results, which
are usually  the number  of votes for  various candidates  or parties,
into whole  numbers which usually give the number  of seats in a
legislative  body. Nearly  every  democratic country  employs its  own
favourite election method. This immediately leads to the question of  which
election method is in some sense optimal and most just? This is not an
easy  question to  answer  as one  can  see from  the  vast amount  of
literature  it has generated.  History  shows  that  at various  times
different  election methods  were in  favour and  many  countries have
changed their election method in the past.

One  can distinguish  between two  major types  of  election methods,
namely the  \emph{majority voting methods}  and the \emph{proportional
voting methods}.  The majority voting  method is used, for example, in
Great Britain to elect members  of parliament and in the United States
of  America to  elect members  of the  congress. In  this  method, the
eligible voters are  divided into voting districts and,  in its purest
form, each district elects one  candidate by majority vote. It is well
known  that  the  percentage  of  members in  parliament  or  congress
belonging to a given party need not be close to the percentage of votes
this party obtained overall. For  this reason, we do not consider this
voting method in this paper.

Among the  most frequently  discussed proportional voting  systems are
the  D'Hondt or  Jefferson method  (e.g. used  in Germany  until 1983,
still  used in  many countries),  the  Hare or  Hamilton method  (used
e.g.\  in  Germany,  Tasmania,  used  in  the US  1840  to  1890)  and
Sainte-Lagu\"e or Webster method (used in New Zealand).

The main  aim of this paper is  to present a general  treatment of all
proportional voting methods.  Mathematically, the accuracy of a voting
method  can  be  measured  by  a so-called  \emph{error  function},  a
function which  measures the  gravity of the  error between  the exact
votes  and  the  allocated  seats.  In practice  error  functions  are
dictated  by courts or  legislative bodies  and these  error functions
need  not coincide  with  mathematical error  functions.   We show  in
Theorem~\ref{the:main} how to find  a proportional voting method which
minimises a given  error function.  A slightly weaker  version of this
theorem   has   been   announced   by  Niemeyer   and   Wolf   (1984),
\cite{NiemeyerWolf84}.   Here  we give  a  first  complete proof.   We
demonstrate  how  most of  the  well  known  election methods  can  be
obtained  from  this  general  result  by  special  choices  of  error
functions.  The  Hare method can  be singled out  by the fact  that it
minimises  infinitely   many  different  error   functions.   This  is
certainly   not   the   case   for   the  methods   of   D'Hondt   and
Sainte-Lagu\"e. Another  aim of this  paper is to give  a mathematical
argument as to why the Hare  method is the best among the proportional
voting methods.

Each voting method  can lead to results which  appear paradoxical.  We
discuss various paradoxa which have  enjoyed a lot of attention in the
literature, for  example, the so-called  Alabama Paradox for  the Hare
method.  We argue  that in many cases the "paradoxity"  is only in the
eye  of the  beholder.   In  particular, the  methods  of D'Hondt  and
Sainte-Lagu\"e are blessed with their own paradoxa.

\section{Background}\label{sec:back}

This paper is concerned  with four different but mathematically
related problems. First there  is the apportionment problem as seen
in the example of the  American House  of Representatives of  the
United  States Congress, where each state of the union is  entitled
to a number of seats, at least one, according to and as closely
proportional as possible to  its number  of legal inhabitants.
The latter  is determined every ten years by a census of the
United States.  Each state is subdivided into voting districts
according to how many seats are assigned to it and the representatives
of each district are assigned by a majority vote.

Second  there  is  the  apportionment problem  for  political  parties
participating  in  an  election  of  a  country  where  seats  in  the
parliament are  assigned to parties  as proportionally as  possible to
the election  results.  As an example  we can take  the German Federal
election.  Germany  is divided into  voting districts.  The  number of
seats in parliament  is twice the number of  voting districts.  Voters
receive  a ballot  on  which there  are  two votes.   The second  vote
(Zweitstimme) is cast  for a political party.  The  number of seats in
parliament  is  assigned  to  eligible parties  as  proportionally  as
possible to these  second votes.  (A party is  eligible if it received
at least  5\% of the  total number of  second votes or at  least three
seats by first vote.)  The  first vote (Erststimme) is to determine by
simple  majority   a  representative   for  the  voting   district  in
parliament.  The elected candidates  receive seats in parliament which
are counted  towards the allocated number  of seats of  the party they
represent.    The   remaining   seats   are  distributed   among   the
participating parties  according to the second votes  and according to
the election  method used.   It may happen  that a party  obtains more
seats by the first vote  (Erststimme) than is allowed according to the
second vote (Zweitstimme).  The parties  keep those seats and they are
called  ``\"Uberhangmandate''.  Using a  first  and  second vote,  the
Federal  elections  combines the  important  feature  of the  majority
election methods, that each voting district is represented by a member
of parliament, with the advantage of proportional election methods, by
which  the  number  of  seats  allocated  to a  party  is  as  closely
proportional to the number of  votes as possible.  Of course, there is
always the difficulty that one or more parties get more seats directly
than by the number of total votes.

The third problem is that of rounding a list of given numbers, so that
the  total sum  of the  rounded numbers  equals the  rounded (standard
rounded)  sum.   The  easiest  example  perhaps is  that  of  rounding
percentage  numbers,  which add  up  to  100\%,  but after  (standard)
rounding fail to  add up to 100\%.  Another problem  can be the amount
of exports of a certain commodity given in Euro by each country of the
European  community,  for   instance.   The  press  publishes  figures
representing the exports of each country rounded to millions of Euros.
The total amount is also  rounded to the nearest million independently
of  the individual  figures.   Now  the problem  is  that the  rounded
figures should add up to the rounded total.

The fourth problem comes from  Operations Research. Let us assume that
a  factory is  producing indivisible  goods,  e.g.\ cars  of the  same
make.  The profit  is to  be distributed  among the  share  holders as
closely proportional to the number of shares as possible.

\subsection{Describing the four problems mathematically}

In all  of these problems  we can start  with a positive  integer $M$,
where  $M$ represents  the number  of seats  in the  first  and second
problem;  $M$ denotes the  sum of  the rounded  integers in  the third
problem;  $M$  is   the  number  of  goods  produced   in  the  fourth
problem. Let $n$ denote the number of states in the Union, or parties,
or numbers to be rounded,  or shareholders in a manufacturing company.
Further,  we   define  a  real  $n$-dimensional  vector   ${\bf  a}  =
(a_1,\ldots,  a_n)$ and  an integer  vector ${\bf  m} =  (m_1, \ldots,
m_n)$. Let  $A$ denote  the sum of  the entries  of ${\bf a}.$  In the
first problem  $a_j$ is the number  of votes in the  $j$-th state, $A$
the total number  of votes and $m_j$ is the  number of seats allocated
to the  state. Let $q_j =  a_jM/A$ be the \emph{exact  quota}, that is
the  exact proportion  of seats  the $j$-th  state may  claim.  In the
second  problem $a_j$ is  the number  of votes  party $j$  obtains and
$m_j$  is the  number of  seats  allocated to  party $j$.  Let $q_j  =
a_jM/A$ be  the \emph{exact  quota}, that is  the exact  proportion of
seats the $j$-th  party may claim.  In the third  problem $a_j$ is the
$j$-th given number and $m_j$  is the integer obtained by the rounding
process  applied  to $a_jM$.  Here  let $q_j  =  a_j$.  In the  fourth
problem, $a_j$ is the exact quota  to which the $j$-th share holder is
entitled to  and $m_j$ the number  of goods he receives.  Again we let
$q_j = a_j$.

This leads to the following  definitions. We are interested in sets of
vectors whose  entries are  the exact quotas.   Let $M$ be  a positive
integer  (e.g.  number  of  seats)  and $A$  a  positive  real  number
(e.g. number of valid votes).  Let $${\mathcal Q} =\{ (q_1,\ldots,q_n)
\mid q_j  \in \R,\, q_j \ge 0,\,  \sum_{i=1}^n q_i = M  \}.$$ This set
contains  all  possible  exact  quotas  among $n$  parties  (where  in
problems one and two we computed  the exact quotas from a total of $A$
valid  votes). Note  that  we  allow ${\mathcal  Q}$  to contain  real
vectors.  Let $${\mathcal M} =  \{ (m_1,\ldots,m_n) \mid m_j \in \Z,\,
m_j \ge 0  \mbox{\ and\ } \sum_{j=1}^n  m_j = M \}$$ be  the subset of
all  integer  vectors  whose  entries  are  non-negative  and  sum  to
$M$.  Note that  ${\mathcal M}$  is a  lattice over  the  integers and
represents the possible seat allocations.

\begin{center}
\begin{figure}

\pgfputat{\pgfxy(4.8,-2)}{\pgfbox[left,center]{\pgfuseimage{poly}}}
\vspace*{3.5cm}
\begin{center}\caption{Election Polyhedra}\end{center}
\end{figure}
\end{center}

We illustrate  the set ${\mathcal Q}$  for 3 parties and  for $M=5$ in
Figure~1. The set ${\mathcal Q}$ is a unilateral triangle in the space
$\R^3$.  Its corners are $(5,0,0)$, $(0,5,0)$ and $(0,0,5)$. The black
dots  represent  the  exact   quotas  which  are  also  possible  seat
distributions, that is exact quotas  which lie in ${\mathcal M}$.  The
other points  in the triangle  correspond to all points  in ${\mathcal
  Q}.$  A valid  apportionment method  should map  every point  in the
triangle to a seat allocation.  It should be possible to draw a region
around each black dot containing only this one black dot such that all
exact quotas inside  this region are mapped to  the seat allocation to
which the black dot is mapped.  We believe a good apportionment should
map a black dot to the seat allocation it describes.

Finally, the apportionment method might have a choice 
of which seat allocation to choose for the exact quotas which lie
on the borders between two or more possible regions.

An apportionment method is in principal a function which maps each
possible vote distribution to a certain seat distribution. For a
precise definition see p.~97 Balinski and Young (1982).

We believe a good apportionment method should satisfy
\begin{equation}\label{eq:good}
\mbox{If\ } {\bf  q}\in {\mathcal M} \mbox{\ then\ } f({\bf q}) =  \{{\bf q}\}.
\end{equation}

In most cases ${\bf a}$ and $M$ are fixed. Sometimes it might
be necessary to emphasise the dependence of $f$ on ${\bf a}$ and $M$, 
in which case $f_{{\bf a},M}$ denotes an apportionment method with given
${\bf a}$ and $M$.

While a mathematical definition of an apportionment method is
sufficient to 
procure a meaningful apportionment, considerations of fairness dictate 
additional requirements. Especially the order in which the parties
are listed on a ballot should have no effect on the outcome of the
election, that is the apportionment method should be symmetric. Formally,
let $\pi$ be a 
 permutation of $\{1, \ldots, n\}$  and let ${\bf q} \in
 {\mathcal Q}$ be the
 $n$-dimensional real vector $(q_1,\ldots, q_n)$. If we define 
 ${{\bf q}^\pi} =  (q_{1^\pi}, \ldots, q_{n^\pi})$.  If $B$ is a
 subset of ${\mathcal Q}$ then define $B^\pi = \{ b^\pi \mid b \in B \}.$

\begin{definition}$($Symmetry of an apportioning method$)$ 
An  apportionment method is \emph{symmetric}, if for every permutation $\pi$
 of $\{1,\ldots,n\}$ we
 have  $f({\bf q}^\pi) = \left(f({\bf q})\right)^\pi$. 
\end{definition} 

The outcome of a  reasonable election method  should depend only on
the vectors of the exact quotas, that is if two different vote distributions
yield the same exact quotas, the results should be the same.
Thus the apportionment method
should be homogeneous.

\begin{definition} $($Homogeneity$)$ 
An  apportionment method is 
\emph{homogeneous} 
if for every real $\lambda >0$
we have $f_{\lambda {\bf a},M}( {\bf q}) = f_{{\bf a},M}({\bf q}).$
\end{definition}

From now on all apportionment methods are symmetric and homogeneous.





\section{Overview over various apportionment methods}

Historically, many apportionment methods have been discussed in
political assemblies and in the literature. In this paper we
are concerned with two major classes:  the divisor methods
and the rounding methods. Examples of the divisor
methods are the methods of D'Hondt and Sainte-Lagu\"e. The best
known rounding method is the Hare method, also called the method of
the greatest remainders. A detailed description of these methods can be
found in Balinski and Young (1982), \cite{BalinskiYoung}. Here we give
a very brief overview in 
order to establish the notation we use for the various methods. 

The \emph{divisor methods} start with a strictly increasing sequence
$(d_j)_{j \in \N}$ of
non-negative real numbers, called the \emph{sequence of divisors}. We
then have to divide the exact quotas  
$q_k$ for $k=1,\ldots, n$ by the $d_j$ and define the $M\times n$ matrix
whose entry in row $j$ and column $k$ is 
$q_k/d_j.$
We select the $M$ largest entries of this matrix and count the number
$m_k$ of entries selected from the $k$-th column.  Then the party $P_k$
will get $m_k$ seats. A \emph{linear divisor method} is given by an
arithmetically increasing sequence 
$(d_j)_{j \in \N}$, that is $d_j =  d(d_0 + (j-1)),$
where $d$ and $d_0$ are fixed real numbers. As $d$ divides all terms
of the sequence $d_j$, we can omit $d$ by the homogeneity of the
apportionment method, that is  we can choose as sequence of divisors
the sequence $d_j = d_0+ (j-1).$ 

Linear divisor methods have been treated extensively in the
literature, see for example the literature review
by Heinrich et al. (2005), \cite{Heinrichetal05}.
The following sequences yield some well known linear divisor
methods (see \cite[p. 124]{Kop}):

\begin{center}
\begin{tabular}{ll}
$d_0$ & Method\\
\hline
0 & Adams \\
1/3 & Danish Method\\
2/5 & Condorcet\\
1/2 & Sainte-Lagu\"e or Webster\\
2/3 & Considerant\\
1 & \mbox{D'Hondt or Jefferson}\\
2 & Imperiali\\
\end{tabular}
\end{center}

Note that for $d_0 \le 1$ the linear divisor methods satisfy
Equation~(\ref{eq:good}).

\emph{Non-linear divisor methods} have also been considered. The most 
well-known ones are the Dean method, where $d_j = j(j-1)/(j-1/2)$,
and the method  of Hill or Huntington, where $d_j = \sqrt{j(j-1)}.$

As we will see later, the values $d_0 = 0$ is biased towards towards
small parties, $d_0 = 1$ is biased towards large parties, whereas the
value $d_0 = 1/2$ is considered to be neutral.

Let  $\rho$ be real constant with $0 \le \rho \le 1$.
The \emph{$\rho$-rounding method} (see
Kopfermann~\cite[p.\ 117]{Kop}) considers the exact quotas $q_j$
and put 
$q_j^\rho :=  \frac{q_j}{M} (M + 2\rho - 1).$ Then we have that
$\sum_{j=1}^n q_j^\rho =   (M + 2\rho -1)$ and
let $\mu_j^\rho  = \lfloor q_j^\rho \rfloor$. Then
\begin{equation}\label{eq:mm}
\sum_{j=1}^n \mu_j^\rho \le \left\{
\begin{array}{ll} M-1 & \mbox{for\  } \rho = 0 \\
M & \mbox{for\  } 0 < \rho < 1 \\
M+1 & \mbox{for\  } \rho = 1 \\
\end{array}
\right.
\end{equation}

For $0 \le  \rho < 1$ we have therefore  $\sum_{j=1}^n q_j^\rho \le M$
and  we  define $r_j^\rho  =  q_j^\rho  -  \mu_j^\rho$.  We  start  by
allocating  $\mu_j^\rho$  seats  to  party  $P_j$. This  leaves  $M  -
\sum_{j=1}^n  \mu_j^\rho$ or,  when $\rho  =  0 $,  $M -  \sum_{j=1}^n
\mu_j^\rho +1$,  unallocated seats. Party $P_j$  obtains an additional
seat if $r_j^\rho$ is among the $M - \sum_{j=1}^n \mu_j^\rho$ (or $M
- \sum_{j=1}^n \mu_j^\rho + 1$)  largest numbers of $r_1^\rho, \ldots,
r_n^\rho.$  If $\rho=0$  there is  still a  problem if  all  $q_i$ are
integers.  In  this case  all $r_j^0 =  0$ and  we have to  assign one
additional seat.  This is allocated to a party at random.  In the case
$\rho = 1$  and all $q_i$ are integers, we have  assigned one seat too
many and  this seat is taken  from a random party  among the parties
who obtained more seats than their exact quota.

For $\rho=1/2$  we obviously have the well-known method of the greatest
remainder. This method is also known under the name Hare or Hamilton.
 For all other values of $\rho$  ($0 \le \rho \le 1$) we have a
general rounding method or a general greatest remainder method. We will
discuss these methods especially with respect to their effect on the
apportionment method a little later.

Note that for $0 < \rho < 1$ the $\rho$-rounding methods satisfy
Equation~(\ref{eq:good}).

\section{Constraints}\label{sec:constraints}

In this section we examine  the constraints of  apportionment methods,
which are  necessary or  beneficial for solving the four problems
considered above.

There are several possibilities to enforce
certain conditions. Not all of these conditions can be fulfilled
exactly
but it is also important to be able to compute the probability with which
a condition is violated when choosing an apportionment method.

These conditions are as follows:

\begin{enumerate}
\item {\sc Bias Condition:}\quad
The apportionment method should be free of bias, that is it should
neither favour large 
 nor small parties. Suppose $L, S$ are subsets of $\{1,\ldots, n\}$
 such that $m_j > m_i$ whenever $j \in L$ and $i \in S.$ Then an
 apportionment method
 favours large parties if 
$$\frac{\sum_{i\in L}m_i}{\sum_{i \in L} a_i} > 
\frac{\sum_{j\in S}m_j}{\sum_{j \in S} a_j}$$   and it favours small
parties if 
$$\frac{\sum_{i\in L}m_i}{\sum_{i \in L} a_i} < 
\frac{\sum_{j\in S}m_j}{\sum_{j \in S} a_j},$$  see \cite[p.\
  125]{BalinskiYoung}. 
\item {\sc Monotony Condition:}\quad  If one party has a larger exact quota
  than another,
it cannot receive less seats, that is if $q_j < q_k$ then $m_j \le m_k$
for all $j, k$ with $j, k \in \{1,\ldots, n\}.$
\item {\sc Lower Quota Condition:}\quad Each party is assigned at least as many
seats as the largest integer less than or equal to the exact quota, that is
$\lfloor q_j \rfloor \le m_j$ for all $j$ with $1\le j\le n.$
\item {\sc Upper Quota Condition:}\quad Each party receives at most as many
seats as the least integer greater or equal to the exact quota, that is
$m_j \le \lceil q_j \rceil$ for all $j$ with $1\le j\le n.$
\item {\sc Majority Condition:}\quad If a party obtains the absolute majority
of the votes then it receives the absolute majority of the seats, that is if
for some $j$ with $1\le
j\le n$ we have $a_j > \frac{1}{2} A$  (and hence $q_j >
\frac{1}{2}M$) then $m_j > \frac{1}{2} M$.

\item {\sc Coalition Condition:}\quad If a party has less than half
  of the total
number of votes then it receives also less than half of the total number
of seats, that is if 
 for some $j$ with $1 \le j \le n$ we have
$a_j < \frac{1}{2} A$ (and hence $q_j < \frac{1}{2}M$)  then $m_j <
  \frac{1}{2} M$.   

This conditions is called Coalition Condition
as it ensures that in a three party method the coalition of the  two
smaller parties which received together the absolute majority of the votes,
will also receive the absolute majority of the seats.
\item {\sc Independence Condition:}\quad The number $m_j$ of seats assigned to
the party $P_j$ depends only on the exact quota $q_j$ but not on the
distribution of the quotas $q_k$ of other parties $P_k$ for $k \neq j.$
\item {\sc House Monotony:}\quad
Let ${\bf a}$ be a vector of votes.
Let ${\bf m}$ be the seat distribution
obtained from ${\bf a}$ in
the case that there are $M$ seats and 
$\tilde{\bf m}$ the seat distribution obtained from ${\bf a}$ in
the case that there are $M+1$ seats. Then a House monotone 
 apportionment method satisfies
${\bf m}  \le \tilde{\bf m}.$
\end{enumerate}

The following table indicates when 
the conditions are satisfied 
for the $\rho$-rounding methods and the linear divisor methods for 
given $d_0$.

\bigskip
\noindent
\begin{tabular}{l|ll}
Condition & $\rho$-rounding method & $d_0$ linear divisor method\\
\hline
Homogeneity & always  & always \\
Unbiased & $\rho = 1/2$ & $\delta_0 = 1/2$ 
\cite[Prop.~5.3]{BalinskiYoung}\\  
Monotony & always  & always \\
Lower Quota & $1/2 \le \rho \le 1$ \cite[p.120]{Kop} &
$\frac{n-2}{n-1} \le d_0 \le 1$
\cite[p.131]{Kop}\\  
Upper Quota & $0 \le \rho \le 1/2$ \cite[p.120]{Kop} &
 $0\le d_0\le \frac{1}{n-1}$ 
\cite[p.131]{Kop}\\  
Majority & $\rho=1,\,$ $M$ odd \cite[p.121]{Kop}
& $d_0 = 1,\,$ $M$
odd \cite[p.131]{Kop}\\
Coalition & $\rho=0,\,$ $M$ odd \cite[p.121]{Kop}
& $d_0 = 0,\,$ $M$ odd
\cite[p.131]{Kop}\\
Independence & $n\le 2$ or  $m_i=m_j$ \cite[p.97]{Kop}& 
$n\le 2$ or  $m_i=m_j$ \cite[p.97]{Kop}\\
House Monot. & never & all 
\cite[Cor.~4.3.1, p.117]{BalinskiYoung}\\  
\end{tabular}

\bigskip
All non-linear divisor methods are homogeneous. None is unbiased, see
\cite[Prop.~5.3]{BalinskiYoung}, all are monotone. 
No non-linear divisor method satisfies both the upper and lower quota
condition, however it never violates both simultaneously, see
\cite[Prop.~6.4 and 6.5]{BalinskiYoung}.  All non-linear divisor
methods are House monotone, see
\cite[Cor.~4.3.1, p. 117]{BalinskiYoung}.

A method for computing the seat bias for a given apportionment method
with a hurdle (e.g. the 5\% hurdle) can be  found in 
Schwingenschl\"ogl and Pukelsheim (2006), \cite{SchwingenPukel06}.
Another condition, a Gentle Majority Condition,
which is important for forming committees is
discussed in Pukelsheim (2006), \cite{Pukelsheim06}.
 
\section{A modification of Hare and the
history of the Hare-Niemeyer Method in Germany}
It  all  started  with   an  article  in  the  newspaper  ``Frankfurter
Allgemeine" (FAZ)  by Dr. K.F.~Fromme which  appeared on  14 October
1970, pointing out the difficulties in determining the number of seats
each  party  gets in  the  various  committees  pursuant to  the  1970
elections in the  Federal Republic of Germany.  In  this election, the
CDU/CSU  won 253  seats in  parliament, the  SPD 237  and the  FDP 28,
giving  the  SPD/FDP  coalition  under  Chancellor  Helmut  Schmidt  a
majority of 265 seats (including  the members from Berlin, who did not
always have a vote but were  counted in the assignment of seats in the
committees). The  D'Hondt system was used  at that time  to determine the
number of parliamentary  seats a given party won  in general elections
and to determine the distribution of committee seats.

The difficulty  that Fromme  pointed out was  the paradoxon  that - in a
committee with  a given number of 33  seats - the distribution according
to D'Hondt was  as follows: the CDU/CSU was assigned  17 seats, SPD 15
and the FDP  l, thus giving the opposition party  a majority. The same
is also true e.g. for  committees with $33,31,29,\ldots ,9$ members. This led
to  a  discussion  in  parliament  about  changing  the  size  of  the
committees because  it was assumed  that a method better  than D'Hondt
``would be  hard to find". The  political question which  now arose was
how to keep the majority and the mathematical question concerned which
method to use.   After giving the matter some  thought, on 16 October 
1970  the first author  wrote to  the administration of  the Bundestag
suggesting the   method of the largest  remainders, which was
subsequently adopted.   

Almost seven years later, the  CDU/FDP coalition in the state
Niedersachsen wanted  to introduce  legislation  which would
replace the D'Hondt-system  by the Hare-system.
But the procedure left a couple of questions open. 
Therefore, the committees  of the state parliament which were discussing
this piece of legislation invited the first author
to a  joint hearing on  23 March 1977.   The
problem they discussed was the following: Even with the Hare method
it can happen, that a coalition of parties with the
majority   of the seats in parliament   does not  get a
majority of the seats in a committee.

As an example to demonstrate that the Hare method does
not always satisfy the  Majority Condition consider three parties
competing for $M=101$ seats. They received $50600, 40650$ and $9750$
votes, respectively. The exact quotas
they receive are  $q_1  =  50.6; q_2   =  40,65, q_3 =  9.75.$   
Then, according to the Hare method, the seat distributions are 
$m_1 = 50, m_2 = 41$ and $m_3 = 10$ and so the first party does not
have the majority of the seats, even though it received the majority
of the votes.

In general, the  Hare  method does
not always  satisfy the Majority  Condition when $M$  is odd. If  $M =
2k+1$, then it is possible that  a party obtains more than 50\% of the
votes but only  receives $k$ seats.  However, it  is not possible that
the party receives less than $k$ seats as is possible using the
Sainte-Lagu\"e method. Further, it  is also fairly easy to fix this
situation as suggested by the first author.
If a party obtains more than  50\% of the votes
then  its quota  is at  least $k  +  1/2.$ If  one gives  one of  the
remaining seats to this party, then the Majority Condition is fulfilled
and it is still possible to redistribute the remaining seats such that
each party gets at least the lower quota. 
D'Hondt's method favors the  largest  party so much so, that
it can receive more than $q+1$ seats, where $q$ is the
exact quota. This is because this method
does not satisfy the Upper Quota Condition.

As a result  of  this  hearing,   the  parliament  in
Niedersachsen decided to hold the elections in Niedersachsen according
to the  modified Hare method, sometimes also  called the Hare-Niemeyer
procedure.  However, in 1986 this method was again replaced by the 
d'Hondt method (see \cite{niedersachsen}).

From 1987 onwards the Hare-Niemeyer method has also been used to for the seat 
allocations in the German Bundestag until it was replaced in 2008 by the
method of Sainte-Lagu\"e (see \cite{bundestag}).

\section{On the error function of a general apportionment
  method}\label{sec:error} 

We start with a general apportionment problem. Suppose $M$ seats in
parliament are to be distributed among
parties $P_1, \ldots, P_n$.
Suppose further the parties received votes
$a_1, \ldots, a_n$, represented by the vector
${\bf a} = (a_1, \ldots, a_n)$, and let $A = \sum_{i=1}^n a_i.$
Then the exact quotas are $q_j = \frac{M a_j}{A}$ for $1 \le j \le n$
and are exactly proportional to the number of votes  party $P_j$ received.
Let $f$ be an apportionment method.
In any apportionment method the transition from a vector ${\bf q}$
with real entries to a vector ${\bf m}$ with integer entries
is bound to cause  errors. On the one hand we can measure
individual errors between  $q_j$ and $m_j$ for
$1 \le j \le n$ and on the other hand there is a global error which
comprises the individual errors. The following definition captures the
properties of an error function which is decomposable into individual
error functions.

\begin{definition}
For $1 \le j \le n$ let $\varphi_j : \N \rightarrow \R_{\ge 0}$ be
functions such that the functions $H_j: \N^+ \rightarrow \R_{\ge 0}$
defined by   $H_j(\ell) = \varphi_j(\ell) -
  \varphi_j(\ell -1 )$ are increasing.
Then the  function $\psi : {\mathcal M} \rightarrow \R_{\ge 0}$ defined
by $\psi({\bf m}) = \sum_{j=1}^n \varphi_j(m_j)$
is called a \emph{decomposable error function}.
The function $\varphi_j(x)$ is called the \emph{$j$th
  component of $\psi$}.
\end{definition}

Further,  we have
\begin{equation}\label{eq:psi}
\psi({\bf m }) = \psi(0) + \sum_{j=1}^n \sum_{\ell=1}^{m_j} H_j(\ell) .
\end{equation}

Decomposable error functions are also considered by Gaffke and
Pukelsheim (2008).
For a given a decomposable error function $\psi$
the following theorem describes an algorithm called {\sc
  MinimalSolution} to determine the vector 
${\bf m_0} \in {\mathcal M}$ which minimizes the error function. Hence
for a given decomposable error function the theorem  can be used to
give rise to an apportionment method, namely the method which assigns
each vector ${\bf a}$ the vector ${\bf m}^0$ which minimizes the error
function, see also Niemeyer and Wolf (1984). 
For divisor methods see also  the Min-Max Theorem of Balinski and
Young (1982), \cite[p. 100]{BalinskiYoung}.

\begin{theorem}\label{the:main}
Let $\psi$ be a decomposable error function. Then
\begin{enumerate}
\item[$1.$] There is at least one solution ${\bf m_0} \in {\mathcal M}$ which
  minimizes the error function $\psi$;
\item[$2.$] ${\bf m}_0 = (m_1^0, \ldots, m_n^0)$ is a minimal solution if
  and only if $H_j(m_j^0 + 1 ) \ge H_k (m_k^0)$ for all
$j, k \in \{1, \ldots, n \}$;
\item[$3.$] ${\bf m}_0$ is unique if $H_j(m_j^0 + 1 ) > H_k (m_k^0)$ for all
$j, k \in \{1, \ldots, n \}$   with  $j \not=k$;
\item[$4.$] 
Consider the
  following algorithm {\sc MinimalSolution}: 
Let ${\mathcal  S}$ denote the set of ${\bf m} = (m_1, \ldots, m_n)$ such that
\begin{enumerate}
\item[$(a)$] $\sum_{j=1}^n m_j = M$,
\item[$(b)$] the multiset $\{ H_j(\ell) \mid 1 \le \ell \le m_j, 1\le j \le n \}$ contains
  $M$ smallest      elements of the matrix 
$$\left( \begin{array}{ccc}
H_1(1) & \ldots & H_n(1) \\
\vdots & & \vdots \\
H_1(M) & \ldots & H_n(M)
	 \end{array} \right).
$$ Then  ${\mathcal S}$ consists of all
minimal solutions. 
\end{enumerate}
\end{enumerate}

\end{theorem}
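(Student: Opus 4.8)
\medskip
\noindent\textbf{Proof (plan).}\ The whole argument rests on reading~(\ref{eq:psi}) as a selection problem: minimising $\psi$ over $\mathcal M$ amounts to choosing, for each column $j$ of the $M\times n$ matrix $\bigl(H_j(\ell)\bigr)_{1\le\ell\le M,\,1\le j\le n}$, a prefix $H_j(1),\dots,H_j(m_j)$ with $\sum_j m_j=M$ so as to make the sum of the selected entries smallest, and the key point is that, since each $H_j$ is increasing, a prefix of length $m_j$ in column $j$ is exactly the multiset of the $m_j$ smallest entries of that column. Part~1 is then immediate: $\mathcal M$ is a non-empty finite set and $\psi$ takes real values, so a minimiser exists. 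Parts~2--4 will all be extracted from one computation, the column-by-column estimate of $\psi(\mathbf{m})-\psi(\mathbf{m}_0)$ coming from~(\ref{eq:psi}); throughout, the criterion and all auxiliary quantities refer only to indices $k$ with $m_k^0\ge1$, since $H_k$ is defined only on $\N^+$.

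For part~2 I would first prove necessity by a single--seat transfer: if $m_k^0\ge1$ and $H_j(m_j^0+1)<H_k(m_k^0)$ for some $j\neq k$, then $\mathbf{m}_0+\mathbf{e}_j-\mathbf{e}_k\in\mathcal M$ and, by~(\ref{eq:psi}), its error is smaller than that of $\mathbf{m}_0$ by exactly $H_k(m_k^0)-H_j(m_j^0+1)>0$, contradicting minimality; for $j=k$ the required $H_j(m_j^0+1)\ge H_j(m_j^0)$ is automatic from monotonicity, so only $j\neq k$ carries content. For sufficiency, let $\mathbf{m}\in\mathcal M$ be arbitrary and set $t:=\max\{\,H_k(m_k^0): 1\le k\le n,\ m_k^0\ge1\,\}$. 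Writing $\psi(\mathbf{m})-\psi(\mathbf{m}_0)=\sum_j\bigl(\sum_{\ell=1}^{m_j}H_j(\ell)-\sum_{\ell=1}^{m_j^0}H_j(\ell)\bigr)$ and splitting the columns by the sign of $m_j-m_j^0$, monotonicity of $H_j$ bounds each ``increase'' column below by $(m_j-m_j^0)H_j(m_j^0+1)$ and each ``decrease'' column below by $-(m_j^0-m_j)H_j(m_j^0)$; the hypothesis $H_j(m_j^0+1)\ge H_k(m_k^0)$ for all $j,k$ then gives $H_j(m_j^0+1)\ge t$ on increase columns and $H_j(m_j^0)\le t$ on decrease columns, so the two groups contribute at least $tD$ and $-tD$ respectively, where $D=\sum_{m_j>m_j^0}(m_j-m_j^0)=\sum_{m_j<m_j^0}(m_j^0-m_j)$ because both vectors sum to $M$. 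Hence $\psi(\mathbf{m})\ge\psi(\mathbf{m}_0)$.

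Part~3 is the same estimate run with strict inequalities. Put $t'':=\max\{\,H_k(m_k^0): m_k^0>m_k\,\}$, attained at some decrease column $k^\ast$, which is then not an increase column; if $\mathbf{m}\neq\mathbf{m}_0$ then $D\ge1$, and on every increase column $j$ the strict hypothesis $H_j(m_j^0+1)>H_{k^\ast}(m_{k^\ast}^0)$ together with monotonicity makes every ``excess unit'' (a level $\ell$ with $m_j^0<\ell\le m_j$) have value $>t''$, while every ``deficit unit'' ($m_k<\ell\le m_k^0$) has value $\le t''$; since there are equally many, $D\ge1$, of each, $\psi(\mathbf{m})>\psi(\mathbf{m}_0)$, so $\mathbf{m}_0$ is the only minimiser. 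For part~4 I would use that each column is already sorted, $H_j(1)\le\cdots\le H_j(M)$, so the selected multiset of $\mathbf{m}$ equals ``the $M$ smallest entries of the matrix'' exactly when the selected positions form a threshold set, i.e.\ $\max\{H_j(\ell):1\le\ell\le m_j,\,1\le j\le n\}\le\min\{H_i(\ell):m_i<\ell\le M,\,1\le i\le n\}$; by monotonicity the left side is $\max_j H_j(m_j)$ and the right side is $\min_i H_i(m_i+1)$, and requiring the former not to exceed the latter is precisely the criterion of part~2 (an index $i$ with $m_i=M$ forces every other coordinate to $0$ and is handled automatically; $n=1$ is trivial). Therefore $\mathcal S$ is exactly the set of minimal solutions.

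The expected obstacle is bookkeeping rather than depth: one must consistently restrict the part~2 criterion and the thresholds $t,t''$ to columns carrying at least one seat (because $H_k$ has domain $\N^+$), and one must fix the intended reading of ``the multiset $\dots$ contains $M$ smallest elements of the matrix'' when matrix entries repeat --- the correct reading being ``the selected positions form a threshold set'', whose equivalence with the selected values forming the initial segment $\{h_1,\dots,h_M\}$ of the sorted list $h_1\le\dots\le h_{Mn}$ of all $Mn$ matrix entries uses only the trivial $h_M\le h_{M+1}$. Beyond that, every inequality is just monotonicity of the $H_j$ applied term by term.
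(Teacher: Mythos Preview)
Your proof is correct and follows essentially the same route as the paper's: existence by finiteness of $\mathcal M$, necessity in part~2 by a single-seat transfer $\mathbf m_0\mapsto\mathbf m_0+\mathbf e_j-\mathbf e_k$, sufficiency by splitting indices into increase and decrease sets and bounding the telescoping sums via monotonicity together with $\sum_j(m_j-m_j^0)=0$, and part~4 by reducing the ``$M$ smallest elements'' condition to the inequality criterion of part~2. Your treatment is in fact slightly more careful than the paper's --- you handle the domain restriction $m_k^0\ge1$ for $H_k(m_k^0)$ explicitly, and in part~4 you argue both inclusions (via the threshold characterisation $\max_k H_k(m_k)\le\min_i H_i(m_i+1)$), whereas the paper's proof of part~4 only verifies $\mathcal S\subseteq\{\text{minimal solutions}\}$.
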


\begin{proof}\quad
\begin{itemize}
\item[1.] The existence follows from the fact that the image of ${\mathcal M}$
under $\psi$  is a
finite subset of $\R_{\ge 0}$ and therefore has a minimal subset. Hence
there is at least one element ${\bf m} \in {\mathcal M}$ for which
$\psi({\bf m}) $ is the minimum and so ${\bf m}$ is a minimal solution.
\item[2.]
Suppose first that ${\bf m}_0=(m_1^0, \ldots, m_n^0)$ is a minimal solution and
$j,  k \in \{1, \ldots, n\}$. If $j=k$  the result
follows since $H_j(\ell)$ is increasing.
Now suppose $j\not=k.$  Choose another solution
${\bf m}_1$  which differs from    ${\bf m}_0$
in exactly two components, namely $m^1_j = m^0_j + 1$  and
   $m^1_k = m^0_k -1$. Since ${\bf m}_0$   is minimal we have
   $\psi({\bf m}_0) \le \psi({\bf m}_1)$   and hence
\begin{eqnarray*}
\sum_{i=1}^n \varphi_i(m_i^0)  & \le &   \sum_{i=1}^n \varphi_i(m_i^1)
\end{eqnarray*} which implies
   \begin{eqnarray*}
\varphi_j(m_j^0) + \varphi_k(m_k^0) & \le &   \varphi_j(m_j^1)
 +  \varphi_k(m_k^1)\\
 & = &   \varphi_j(m_j^0 + 1)  +  \varphi_k(m_k^0 - 1)
\end{eqnarray*} and so
 \begin{eqnarray*}
\lefteqn{H_k(m_k^0) = \varphi_k(m_k^0) - \varphi_k(m_k^0-1)}\\
 & \le &
 \varphi_j(m_j^0+1)  -  \varphi_j(
m_j^0)  = H_j(m_j^0 + 1).
\end{eqnarray*}

On the other hand suppose  ${\bf m}_0$ is a solution for which
$ H_k(m_k^0)  \le H_j(m_j^0 + 1) $ for all $j, k \in \{1, \ldots, n\}$.
Let ${\bf m} = ( m_1, \ldots, m_n)$ be another solution.
Then
$\psi({\bf m}) = \sum _{j=1}^{n} \varphi_j (m_j)$. Compare the values
of $\psi$ at $m_j^0$ with those at arbitrary points $m_j$
and note
$$M =\sum_{j=1}^{n} m_j^0 = \sum_{j=1}^{n} m_j .$$ Therefore one can
divide the indices
$1,\ldots,n$, for which $m_j^0 \neq m_j$ into two disjoint sets
$J_1$ and $J_2$, according to whether  $m_j^0 < m_j$ or
$m_j^0 > m_j$. Then  $\sum_{j \in J_1}(m_j - m_j^0) = \sum_{j
\in J_2}(m_j^0-m_j)$.
This yields that
\begin{eqnarray*}
\lefteqn{ \psi({\bf m}) - \psi({\bf m}_0) }\\
  &= &
 \sum_{j\in J_1} \left( \varphi_j (m_j ) - \varphi_j( m_j^0) \right)
 +  \sum_{j\in J_2} \left( \varphi_j (m_j ) - \varphi_j( m_j^0) \right) \\
       & = &  L - R
 \end{eqnarray*}

and that
\begin{eqnarray*}
L & = & \sum_{j\in J_1} \left( \varphi_j (m_j )  -
\varphi_j( m_j^0) \right) \\
& = & \sum_{j\in J_1} \left(  H_j(m_j) + H_j(m_j - 1 )
+ \cdots + H_j( m_j^0 + 1 ) \right) \\
& \ge& \sum_{j\in J_1}\left( m_j - m_j^0 \right) \cdot H_j(  m_j^0 + 1 )\\
& \ge& \min_{j\in J_1}  H_j(  m_j^0 + 1 )\cdot \sum_{j\in J_1} \left(
m_j - m_j^0 \right).
\end{eqnarray*}

On the other hand

\begin{eqnarray*}
R & = & \sum_{j\in J_2} \left( \varphi_j (m_j^0 )  -
\varphi_j( m_j) \right) \\
& = & \sum_{j\in J_2} \left(  H_j(m_j^0) + H_j(m_j^0 - 1 )
+ \cdots + H_j( m_j + 1 ) \right) \\
& \le& \sum_{j\in J_2}\left( m_j^0 - m_j \right) \cdot H_j(  m_j^0 )\\
& \le& \max_{j\in J_2}  H_j(  m_j^0 ) \cdot  \sum_{j\in J_2}
\left( m_j^0 - m_j \right) \\
& \le& \max_{j\in J_2}  H_j(  m_j^0 ) \cdot  \sum_{j\in J_1}
\left( m_j - m_j^0 \right).
\end{eqnarray*}

Then finally
\begin{eqnarray*}
L - R & \ge & \min_{j\in J_1}
 H_j(  m_j^0 + 1 ) \cdot  \sum_{j\in J_1}
\left( m_j - m_j^0 \right) -
\max_{j\in J_2}  H_j(  m_j^0 ) \cdot  \sum_{j\in J_1}
\left( m_j - m_j^0 \right) \\
 & \ge & \left( \min_{j\in J_1}
 H_j(  m_j^0 + 1 )  -
\max_{j\in J_2}  H_j(  m_j^0 ) \right)
\cdot  \sum_{j\in J_1} \left( m_j - m_j^0 \right) \\
 & \ge & \left( \min_{j\in J_1}
 H_j(  m_j^0 + 1 )  - \max_{j\in J_2}H_j(m_j^0) \right).
\end{eqnarray*}
By or assumption, 
$ H_j(  m_j^0 + 1 )  \ge H_k(m_k^0) $ for all $j,k$ and so 
$ H_j(  m_j^0 + 1 )  - H_k(m_k^0) \ge 0$ for $j \in J_1$ and $k \in
J_2.$
This shows that $L - R \ge 0$ and therefore $\psi({\bf m} ) \ge \psi(
{\bf m^0}$ for any ${\bf m} \in {\mathcal M}.$ Therefore
${\bf m^0}$ is a minimal solution.

\item[3.]
If the condition $H_j(m_j^0 + 1 ) > H_k(m_k^0 )$  is also satisfied
for all $j$ and $k$ with
$j \neq k$
then $L - R > 0,$  that is, the minimal solution is unique.

\item[4.]  Let ${\bf m} \in {\mathcal S}$ with
${\bf m} = (m_1, \ldots, m_n)$. Suppose that
there are $j, k$ with $H_j(m_j + 1) < H_k(m_k)$. Then  
the multiset $ \{ H_i(\ell) \mid 1 \le \ell \le m_i, 1 \le i\le n\}$
does not contain $M$ smallest elements as we can replace
$H_k(m_k)$ by the smaller $H_j(m_j + 1)$. Thus for all $j,k$ we have
$H_j(m_j + 1) \ge H_k(m_k)$ and 
by (2) ${\bf m}$ is a
minimal solution.
\end{itemize}
\end{proof}

Sainte-Lagu\"e \cite[Satz 6.1.8]{Kop} knew that the 
Sainte-Lagu\"e method minimised the error function with
 $\varphi_j(x) = \frac{1}{q_j} (x - q_j)^2$. Balinski and Ramirez
\cite{BalinskiRamirez99} show that the linear divisor methods
minimize the error function with
 $\varphi_j(x) = \frac{1}{q_j} (x - q_j +d_0 - \frac{1}{2})^2$.

\begin{corollary}\label{cor:lindiv} 
The linear divisor method given by $d_0$ can
 be obtained from the algorithm {\sc  MinimalSolution} for the 
  decomposable error function $\psi$, with $j$-th component
 $\varphi_j(x) = \frac{1}{q_j} (x - q_j + d_0 -
  \frac{1}{2})^2$.
\end{corollary}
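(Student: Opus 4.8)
The plan is to verify that the stated choice of $\varphi_j$ is a legitimate decomposable error function and then to show that the algorithm {\sc MinimalSolution} applied to it reproduces exactly the selection rule defining the linear divisor method with parameter $d_0$. First I would compute the increments $H_j(\ell) = \varphi_j(\ell) - \varphi_j(\ell-1)$. With $\varphi_j(x) = \frac{1}{q_j}\bigl(x - q_j + d_0 - \tfrac12\bigr)^2$, expanding the difference of two squares gives $H_j(\ell) = \frac{1}{q_j}\bigl(2\ell - 2q_j + 2d_0 - 2\bigr) = \frac{2}{q_j}\bigl(\ell - 1 + d_0\bigr) - 2 = \frac{2}{q_j}\bigl(\ell - 1 + d_0 - q_j\bigr)$. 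This is clearly strictly increasing in $\ell$ (the coefficient $2/q_j$ is positive whenever $q_j>0$), so $\psi$ is a decomposable error function and Theorem~\ref{the:main} applies; the degenerate case $q_j = 0$ forces $m_j = 0$ and can be handled separately or by a limiting argument.

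Next I would relate the ordering of the $H_j(\ell)$ to the ordering used by the divisor method. Recall that the linear divisor method with divisor sequence $d_\ell = d_0 + (\ell - 1)$ selects the $M$ largest entries of the matrix with $(\ell,k)$-entry $q_k / d_\ell = q_k/(d_0 + \ell - 1)$. Equivalently, party $P_k$ receives an $\ell$-th seat before party $P_j$ receives an $m$-th seat whenever $q_k/(d_0+\ell-1) > q_j/(d_0+m-1)$. The key observation is that $H_j(\ell) < H_k(m)$ is, after rearranging, equivalent to $\frac{\ell - 1 + d_0}{q_j} < \frac{m - 1 + d_0}{q_k}$, i.e. to $\frac{q_k}{m-1+d_0} < \frac{q_j}{\ell-1+d_0}$ (valid since all the quantities $d_0 + \ell - 1$ are positive for $\ell \ge 1$ when $d_0 > 0$; the boundary case $d_0 = 0$ requires the usual convention that a zero divisor yields priority $+\infty$, matching $\varphi_j$'s behaviour at $\ell=1$). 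Thus the partial order ``$H_j(\ell)$ is small'' on the matrix of increments is precisely the reverse of the partial order ``$q_k/d_\ell$ is large'' on the divisor matrix: selecting the $M$ smallest increments $H_j(\ell)$ is the same as selecting the $M$ largest divisor-matrix entries.

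Therefore the set ${\mathcal S}$ produced by {\sc MinimalSolution} — the vectors ${\bf m}$ with $\sum_j m_j = M$ for which $\{H_j(\ell) : 1 \le \ell \le m_j,\ 1\le j\le n\}$ comprises $M$ smallest entries of the increment matrix — coincides with the set of seat allocations produced by the linear divisor method for $d_0$, including agreement on the tie-breaking freedom. By part (4) of Theorem~\ref{the:main} these are exactly the minimal solutions of $\psi$, which is what the corollary asserts. I expect the main obstacle to be purely bookkeeping: keeping the direction of the inequalities straight through the reciprocal/cross-multiplication step, and correctly matching the boundary conventions at $d_0 = 0$ (where the first divisor vanishes) with the behaviour of $H_j(1) = \frac{2}{q_j}(d_0 - q_j)$; everything else is a direct comparison of two selection rules already shown to agree term by term.
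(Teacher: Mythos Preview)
Your proposal is correct and follows essentially the same route as the paper: compute $H_j(\ell)=\frac{2(\ell+d_0-1)}{q_j}-2$, note it is strictly increasing, and observe that ordering the $H_j(\ell)$ from smallest to largest is the same as ordering the divisor-matrix entries $q_j/(d_0+\ell-1)$ from largest to smallest. The only cosmetic difference is that the paper packages the order-reversal via the explicit formula $K_j(\ell)=\frac{2}{H_j(\ell)+2}$ rather than your inequality chain, but the content is identical.
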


\begin{proof}
Note first that $\varphi_j(x)$
is a convex function. The corresponding
  decomposable error function $\psi$ always has a minimal solution.
Algorithm {\sc MinimalSolution} leads to the linear divisor method
described
by $d_0$, because we have that
$H_j(\ell) = \varphi_j(\ell) - \varphi_j(\ell-1)
=  \frac{2}{q_j}( \ell - q_j + d_0 - 1 )
= \frac{2(\ell +d_0 -1 )}{q_j} -2$ is strictly increasing.
 Consider the function
 $K_j(\ell) = \frac{q_j}{d_0 + \ell - 1}. $  This function
 yields the standard algorithm for all linear divisor methods.
 Then $K_j(\ell)$ is strictly decreasing and therefore has a minimal
 solution which can be found by the analog of algorithm {\sc MinimalSolution}
 by selecting the $M$ largest
 elements in a matrix whose entries are $K_j(\ell).$
 Note that $K_j(\ell)  = \frac{2}{H_j(\ell) + 2}$ and therefore we
 can also find a minimal solution by applying algorithm {\sc
   MinimalSolution} directly
 to the matrix $H_j(\ell).$

\end{proof}

\begin{corollary}\label{cor:rhoround1}
Let $\varphi$ be a  symmetric and strictly convex function
with $ \varphi (0)=0$  and $\varphi(x) >0$
for all $ x>0$. Let
$$\psi({\bf m}) = \sum_{j=0}^n \varphi(|m_j-q_j^\rho|)=
 \sum_{j=0}^n \varphi(|m_j-\lfloor q_j^\rho \rfloor -r_j^\rho|).
$$ 
Then the algorithm {\sc  MinimalSolution} with the 
  decomposable error function $\psi$
yields the $\rho$-rounding method.
\end{corollary}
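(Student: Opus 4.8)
The plan is to exhibit $\psi$ as a decomposable error function and then read off its minimal solutions from Theorem~\ref{the:main}(4). Using the symmetry of $\varphi$, I would take as components $\varphi_j(x)=\varphi(x-q_j^\rho)$, so that $\varphi_j(x)=\varphi(|x-q_j^\rho|)$ for $x\in\N$ and $\psi$ is exactly the function in the statement. Since $\varphi$ is strictly convex, its first difference is strictly increasing, hence $H_j(\ell)=\varphi_j(\ell)-\varphi_j(\ell-1)=\varphi(\ell-q_j^\rho)-\varphi(\ell-1-q_j^\rho)$ is a strictly increasing function of $\ell$; so $\psi$ is a decomposable error function and parts $1$--$4$ of Theorem~\ref{the:main} apply. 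I would also record the one analytic fact used below: the hypotheses force $\varphi$ to be strictly increasing on $[0,\infty)$ (write $0\le a<b$ as a strict convex combination of $-b$ and $b$ and use symmetry). For clarity I would carry out the argument for $0\le\rho<1$, where $\sum_j\mu_j^\rho\le M$ by (\ref{eq:mm}); the boundary values $\rho\in\{0,1\}$ involve the random tie-break already built into the $\rho$-rounding method and are treated by the same bookkeeping (for $\rho=1$ one over-allocates the floors and then removes a seat; for $\rho=0$ with all $r_j^\rho=0$ the whole matrix is symmetric).

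The core is a one-variable analysis of each column of the matrix $\bigl(H_j(\ell)\bigr)$. Writing $q_j^\rho=\mu_j^\rho+r_j^\rho$ with $\mu_j^\rho=\lfloor q_j^\rho\rfloor$ and $r_j^\rho\in[0,1)$ and splitting according to the sign of $\ell-q_j^\rho$, I would establish
\[
H_j(\ell)=\varphi(q_j^\rho-\ell)-\varphi(q_j^\rho-\ell+1)\le -\varphi(1)\quad\text{for }1\le \ell\le\mu_j^\rho,
\]
\[
H_j(\mu_j^\rho+1)=\varphi(1-r_j^\rho)-\varphi(r_j^\rho)=:h(r_j^\rho),
\]
\[
H_j(\ell)\ge H_j(\mu_j^\rho+2)=\varphi(2-r_j^\rho)-\varphi(1-r_j^\rho)>\varphi(1)\quad\text{for }\ell\ge\mu_j^\rho+2,
\]
where the two outer inequalities use that the first difference of the strictly convex $\varphi$ is strictly increasing on $[0,\infty)$. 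Next I would note that $h\colon[0,1]\to\R$, $h(r)=\varphi(1-r)-\varphi(r)$, is strictly decreasing (immediate from $\varphi$ strictly increasing on $[0,\infty)$), with $h(0)=\varphi(1)$ and $h(1)=-\varphi(1)$, so $h(r_j^\rho)\in(-\varphi(1),\varphi(1)]$. The conclusion is a clean three-block picture for every column: the $\mu_j^\rho$ lowest entries are $\le-\varphi(1)$, they are followed by the single \emph{middle} entry $h(r_j^\rho)\in(-\varphi(1),\varphi(1)]$, every further entry exceeds $\varphi(1)$, and within the column the entries strictly increase.

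Given this picture the selection in Theorem~\ref{the:main}(4) is forced. Set $N=\sum_j\mu_j^\rho$ and $K=M-N$; then $0\le N\le M$ by (\ref{eq:mm}), and $N>\sum_j(q_j^\rho-1)=(M+2\rho-1)-n\ge M-n-1$ gives $K\le n$, while $\mu_j^\rho\le q_j^\rho\le M$. Because the $N\le M$ lowest entries are strictly below every middle and every upper entry, they all belong to any choice of $M$ smallest entries of the matrix; the remaining $K$ entries must then be $K$ smallest among the middle and upper ones, and since each middle value $h(r_j^\rho)\le\varphi(1)$ lies strictly below every upper value and $K\le n$, this means choosing $K$ of the numbers $h(r_1^\rho),\dots,h(r_n^\rho)$ that are smallest --- equivalently, since $h$ is strictly decreasing, choosing a set $T$ of $K$ indices with $r_j^\rho\ge r_k^\rho$ whenever $j\in T$ and $k\notin T$. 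As $H_j$ strictly increases within each column, a vector ${\bf m}\in{\mathcal M}$ collects exactly this multiset if and only if $m_j=\mu_j^\rho$ for $j\notin T$ and $m_j=\mu_j^\rho+1$ for $j\in T$. By Theorem~\ref{the:main}(4) these are precisely the minimal solutions of $\psi$, and they are precisely the seat vectors produced by the $\rho$-rounding method, which allocates $\mu_j^\rho$ seats and then hands the $K=M-N$ leftover seats to the parties with the $K$ largest remainders $r_j^\rho$, with the identical ambiguity when remainders tie at the cut-off. This proves the corollary.

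The one point needing genuine care --- and the only place where strict convexity of $\varphi$ rather than mere convexity is essential --- is that the middle entry $H_j(\mu_j^\rho+1)=h(r_j^\rho)$ is itself negative precisely when $r_j^\rho>\tfrac12$, so one cannot simply argue ``select the negative entries first''. The clean separation of the three blocks is recovered only through the uniform bounds $H_j(\ell)\le-\varphi(1)$ for $\ell\le\mu_j^\rho$ and $H_j(\ell)>\varphi(1)$ for $\ell\ge\mu_j^\rho+2$ against $h(r_j^\rho)\in(-\varphi(1),\varphi(1)]$, together with the strict monotonicity of $h$, which is what makes the ordering of the remainders govern the selection.
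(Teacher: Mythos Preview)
Your argument is correct and follows essentially the same route as the paper's proof: both identify the threshold $-\varphi(1)$ via $H_j(q_j^\rho)=-\varphi(1)$, show that the first $\mu_j^\rho$ entries of column $j$ lie at or below it while the $(\mu_j^\rho+1)$st entry $h(r_j^\rho)=\varphi(1-r_j^\rho)-\varphi(r_j^\rho)$ lies strictly above it, and conclude that the remaining $M-\sum_j\mu_j^\rho$ selections are governed by the ordering of the remainders. Your write-up is in fact more careful than the paper's on two points the paper leaves implicit --- the bound $K\le n$ and the cross-column inequality $H_j(\mu_j^\rho+2)>\varphi(1)\ge h(r_k^\rho)$ ensuring that no ``upper'' entry is ever selected --- so nothing is missing.
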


\begin{proof}
Recall  that for  the $\rho$-rounding  method we  let ${\bf  q}^\rho =
(q_1^\rho, \ldots, q_n^\rho)$  by defining $q_j^\rho = \frac{q_j}{M}(M
+2\rho  -1 ).$  Define $r_j^\rho$  by  $ q_j^\rho  = \lfloor  q_j^\rho
\rfloor +  r_j^\rho.$ We shall  see that the function  $\psi({\bf m})$
yields  the $\rho$-rounding  method.   For $j  \in  \{1, \ldots,  n\}$
define as in  Theorem~\ref{the:main} $H_j(x) = \varphi(|x-q_j^\rho|) -
\varphi(|x-q_j^\rho-1|).$  As  $\varphi(x)$  is  strictly  convex,  it
follows that  $H_j(x)$ is strictly increasing  for all $x  \in \R.$ In
particular,
$$
H_j( \lfloor q_j^\rho\rfloor ) \le H_j( q_j^\rho )
 =  -\varphi(1)  < 0.$$
 Also, $$- \varphi(1) =  H_j(q_j^\rho) < H_j( \lfloor q_j^\rho \rfloor
 + 1) \le H_j(\ell)$$ for  all $\ell \ge \lfloor q_j^\rho\rfloor + 1.$
 Hence  the   union  of  the  sets  $L_j=\{\ell   \mid  H_j(\ell)  \le
 -\varphi(1)\}$  contain  $  \sum_{j=1}^n  \lfloor  q_j^\rho  \rfloor$
 elements.  By  Equation (\ref{eq:mm}) these are at  most $M$ elements
 for $0< \rho < 1$  and at most  $M-1$ elements for $\rho=0$  and at
 most   $M+1$  elements   for  $\rho=1.$   Therefore   algorithm  {\sc
 MinimalSolution}  allocates to party  $P_j$ a  number $m_j^0 \ge |L_j|$
 seats. If $\rho  \le 1$, each party obtains  at least $\lfloor q_j^\rho
 \rfloor$ seats and the remaining seats are allocated according to the
 smallest elements among $H_j( \lfloor q_j^\rho \rfloor + 1)$ for $1 \le j
 \le n$.   Since $H_j(\lfloor q_j^\rho \rfloor + 1) =
 \varphi(1-r_j^\rho) - \varphi(r_j^\rho)$ and $\varphi$ is strictly
 convex we have
$ \varphi(1-r_j^\rho) - \varphi(r_j^\rho) \le 
 \varphi(1-r_k^\rho) - \varphi(r_k^\rho)$ if and only if $r_j^\rho \ge
 r_k^\rho.$ Thus the
remaining seats are allocated according to the greatest remainders.
If $\rho=0$  and all $r_j^\rho=0$ the additional seat is allocated at
 random which corresponds to choosing a random minimal solution.
If $\rho=1$  and all $r_j^\rho=0$ we have  assigned $M+1$ seats.
One seat is
 taken from a random  party which corresponds to choosing a random
 minimal solution. This yields precisely the $\rho$-rounding
 method.
\end{proof}

Note that this was proved by P\'olya (1919), \cite{Polyaa}, for $\rho=1/2$.

\begin{corollary}\label{cor:rhoround2}
$$\psi({\bf m}) = \sum_{j=0}^n |m_j-q_j^\rho|=
 \sum_{j=0}^n |m_j-\lfloor q_j^\rho \rfloor -r_j^\rho|.
$$ 
The $\rho$-rounding method  can
 be obtained from the algorithm {\sc  MinimalSolution} for the 
  decomposable error function $\psi$.
\end{corollary}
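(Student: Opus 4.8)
The statement is the $\varphi(x)=|x|$ instance of Corollary~\ref{cor:rhoround1}, which however fails that corollary's strict-convexity hypothesis --- $|x|$ is merely convex. So the plan is to replay the proof of Corollary~\ref{cor:rhoround1}, checking that strict convexity was used only in ways that survive weakening to plain convexity, the price being that several minimal solutions may now coexist.

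Concretely, I would put $\varphi_j(x)=|x-q_j^\rho|$, so $\psi({\bf m})=\sum_{j=1}^n\varphi_j(m_j)$, and compute $H_j(\ell)=\varphi_j(\ell)-\varphi_j(\ell-1)$ directly. Writing $\mu_j^\rho=\lfloor q_j^\rho\rfloor$ and $r_j^\rho=q_j^\rho-\mu_j^\rho\in[0,1)$, one gets the piecewise-constant formula $H_j(\ell)=-1$ for $1\le\ell\le\mu_j^\rho$, then $H_j(\mu_j^\rho+1)=(1-r_j^\rho)-r_j^\rho=1-2r_j^\rho$, then $H_j(\ell)=1$ for $\ell\ge\mu_j^\rho+2$; in particular $1-2r_j^\rho\in(-1,1]$, equal to $1$ exactly when $r_j^\rho=0$. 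Since $\varphi_j$ is convex, $H_j$ is (weakly) increasing, so $\psi$ is a decomposable error function and Theorem~\ref{the:main} --- notably parts~(1), (2) and (4) --- applies.

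Next I would run {\sc MinimalSolution}: select the $M$ smallest entries of $(H_j(\ell))$. These fall into three blocks --- the value $-1$ (occurring $t:=\sum_j\mu_j^\rho$ times), the values $1-2r_j^\rho$ with $r_j^\rho>0$ (each strictly between $-1$ and $1$), and copies of $1$ --- and Equation~(\ref{eq:mm}) gives $t\le M$ for $0\le\rho<1$ and $t\le M+1$ for $\rho=1$. In the generic case $t\le M$, the $M$ smallest entries consist of all $t$ copies of $-1$ (forcing $m_j^0\ge\mu_j^\rho$, i.e.\ each party gets at least its lower $\rho$-quota) together with the $M-t$ smallest remaining entries; since $1-2r_j^\rho<1$ and is strictly decreasing in $r_j^\rho$, and since $\sum_j r_j^\rho=M-t+2\rho-1$ guarantees at least $M-t$ positive remainders whenever not all $r_j^\rho$ vanish, those $M-t$ seats go to the $M-t$ parties with largest remainders --- precisely the greatest-remainder step of the $\rho$-rounding method. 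Equal remainders give equal matrix entries, hence by Theorem~\ref{the:main}(4) several minimal solutions, matching the random tie-break. The two degenerate cases are handled exactly as in Corollary~\ref{cor:rhoround1}: if $\rho=0$ and every $r_j^\rho=0$ then $t=M-1$, the middle block is empty, and the single further entry to pick is one of $n$ equal copies of $1$ --- the random placement of the one leftover seat; if $\rho=1$ and every $r_j^\rho=0$ then $t=M+1>M$, so {\sc MinimalSolution} keeps only $M$ of the $M+1$ copies of $-1$, the $n$ ways of omitting one being exactly the prescribed random removal of one seat. In every case the set ${\mathcal S}$ returned by {\sc MinimalSolution} equals the set of admissible outputs of the $\rho$-rounding method.

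The algebra here is elementary, so the one thing to watch --- and the real content --- is the tie bookkeeping: one must confirm that the non-uniqueness of the minimal solution forced by the merely weak monotonicity of $H_j$, which Theorem~\ref{the:main}(4) reports in full, coincides with exactly the freedom built into the $\rho$-rounding method (equal remainders for $0<\rho<1$, one random seat in the all-integer cases at $\rho\in\{0,1\}$), and that the block sizes are consistent with Equation~(\ref{eq:mm}). One could instead perturb $|x|$ to the strictly convex $|x|+\varepsilon x^2$, invoke Corollary~\ref{cor:rhoround1}, and let $\varepsilon\to0$, but the direct argument is cleaner and exposes the tie structure explicitly.
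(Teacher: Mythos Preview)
Your proposal is correct and follows essentially the same approach as the paper: take $\varphi_j(x)=|x-q_j^\rho|$, compute $H_j$ explicitly, observe that although $H_j$ is only weakly increasing globally it is strictly increasing on the relevant interval (so that $H_j(\mu_j^\rho+1)=1-2r_j^\rho$ orders the bonus seats by largest remainder), and conclude that the proof of Corollary~\ref{cor:rhoround1} carries over. The paper's own proof is in fact briefer than yours---it records the piecewise formula for $H_j(x)$ on the reals, notes the strict increase on $[q_j^\rho,q_j^\rho+1]$, and then simply states that the previous argument generalises---whereas you work through {\sc MinimalSolution} and the tie/degenerate cases explicitly.
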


\begin{proof}
Choose $\varphi_j(x) = |x-q_j|$. Note that this function is
 convex,
though  not
strictly convex. 
Now
$$H_j(x) = |x-q_j^\rho| - | x-q_j^\rho-1| = \left\{ \begin{array}{ll}
-1 & x < q_j^\rho  \\
2x-2q_j^\rho -1 & q_j^\rho \le x \le q_j^\rho + 1 \\
 1 & x > q_j^\rho + 1
 \end{array}
 \right..$$
Observe that $H_j(x)$ is strictly increasing for 
$q_j^\rho \le x \le q_j^\rho + 1.$ Thus the proof of
Corollary~\ref{cor:rhoround1} immediately generalises to this situation.
\end{proof}

\begin{corollary}\label{cor:rhoround3}
Let
$\varphi(x) = (x-q_j^\rho)^p$ for all  $p \ge 1$ and let
$\psi({\bf m}) = \sum_{j = 1}^n | q_j^\rho - m_j |^p$.
Then the algorithm {\sc  MinimalSolution} with the 
  decomposable error function $\psi$
yields the $\rho$-rounding method.
\end{corollary}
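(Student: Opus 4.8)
The plan is to reduce this statement to Corollaries~\ref{cor:rhoround1} and~\ref{cor:rhoround2} by distinguishing the two cases $p=1$ and $p>1$. Set $\varphi(t)=|t|^{p}$ for $t\in\R$; then the $j$th component of $\psi$ is $\varphi_j(x)=\varphi(|x-q_j^\rho|)=|x-q_j^\rho|^{p}$, so that $\psi({\bf m})=\sum_{j=1}^{n}\varphi(|m_j-q_j^\rho|)$ has exactly the form treated in Corollary~\ref{cor:rhoround1} (and the expression $(x-q_j^\rho)^p$ in the statement is to be read as $|x-q_j^\rho|^p$, consistently with the sum $\sum_j|q_j^\rho-m_j|^p$). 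It therefore suffices to check, for $p>1$, the hypotheses imposed in Corollary~\ref{cor:rhoround1}, while the degenerate exponent is handled separately.

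For $p=1$ the function $\varphi(t)=|t|$ is convex but not strictly convex, and the assertion is word for word the content of Corollary~\ref{cor:rhoround2}; nothing further is needed. For $p>1$ I would verify that $\varphi(t)=|t|^{p}$ meets the requirements of Corollary~\ref{cor:rhoround1}: it is even, hence symmetric; it satisfies $\varphi(0)=0$ and $\varphi(t)>0$ for $t>0$; and it is strictly convex on $\R$. Granting this, Corollary~\ref{cor:rhoround1} applies directly and yields that {\sc MinimalSolution} for this $\psi$ produces the $\rho$-rounding method, completing the proof.

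The only step that is not entirely routine --- and the closest thing here to an obstacle --- is the strict convexity of $t\mapsto|t|^{p}$ when $1<p<2$, since then $\varphi''(t)=p(p-1)|t|^{p-2}$ blows up as $t\to 0$. One argues that strictness is nevertheless retained at the origin: for $t\neq 0$ the strictly positive second derivative gives strict convexity on each of $(-\infty,0)$ and $(0,\infty)$, and since $\varphi$ is $C^{1}$ with a strict global minimum at $0$, no chord of the graph can meet it at an interior point there. Equivalently, one checks directly that $H_j(t)=|t-q_j^\rho|^{p}-|t-q_j^\rho-1|^{p}$ is strictly increasing in $t$ --- which is the property that the proof of Corollary~\ref{cor:rhoround1} actually uses --- after which the remainder of that corollary's argument carries over verbatim.
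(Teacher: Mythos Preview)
Your proposal is correct and mirrors the paper's own treatment: the paper gives no separate proof of this corollary, evidently regarding it as an immediate consequence of Corollaries~\ref{cor:rhoround1} and~\ref{cor:rhoround2}, which is precisely the reduction you carry out. Your added care about strict convexity of $|t|^{p}$ at the origin for $1<p<2$ is a welcome detail that the paper leaves unspoken.
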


Obviously one can also take the  $p$-th root of $\psi({\bf m})$ as the
function $\psi$ and this shows  that all $\ell_p$-norms can be used as
error functions.  In  particular for $p = 2$  the $\ell_2$-norm is the
usual  Euclidean  distance.   Thus  the  $\rho$-rounding  method  also
minimises the Euclidean distance between the points ${\bf q}^\rho$ and
${\bf m}_0,$ and thus yields  the closest integer valued lattice point
for each vector ${\bf q}$ of exact quotas.  
Note that this was known to P\'olya (1918), \cite{Polyab}, for general
convex functions $\varphi$ 
and to Birkhoff (1976), \cite{Birkhoff76}, for the $\ell_p$-norms.
Finally we emphasize that
$$ \lim_{p\rightarrow\infty} \psi_p({\bf m}) = \psi_{\infty}({\bf m}) =
Max_{j=1,\ldots,n}(|m_j - q_j^\rho|) = ||{\bf m} - {\bf q}^\rho||_\infty $$
yields that
the  $\rho$-rounding method also minimises the maximum norm.

 \section{The paradox paradoxa}
 
We finish this paper by discussing certain paradoxa. As all 
linear divisor methods except Sainte-Lagu\"e's and all
$\rho$-rounding methods except Hare's are biased, we
restrict our attention to these two methods. First we show, by
giving some examples, that the Sainte-Lagu\"e method comes with its
own set of paradoxa. It is very susceptible to
minor variations in quotas ({\sc Instability Paradox}).  We
demonstrate that it violates the Majority Condition in a major way and
finally show that the seat distributions for one party can vary
immensely when the votes of other parties change.

We  then  consider  the  Hare  method.  Here  we  address  the  highly
 publicised {\sc  New State}, {\sc Alabama} and  {\sc Increased Votes}
 paradoxa  and  argue why  we  believe  that  this is  no  paradoxical
 behaviour.  Finally, we  show how  a slight  modification of  the Hare
 method fulfils the majority condition.
 
 \subsection{The method of Sainte-Lagu\"e}

\subsubsection{\sc Instability Paradox} 
 
The   method  of   Sainte-Lagu\"e  displays   the   {\sc  Instability
Paradox}, that is  small variations in the exact  quotas can lead to
large  variations  in  the  seat  allocations. In  the  following  two
examples we see in the case  where one party has the absolute majority
of the votes, variations in the votes of the small parties can lead to
significantly different seat allocations for the major party, without
any changes in the votes of the major party. Similar examples are also
discussed by Huntington (1928), \cite[p. 95]{Hunt}.

For example, suppose  5   parties, 4 of which 
are very small,  are competing in an election for $M=68$ seats.
If the  exact quotas  given by the election are:
$$q_1 =  65.91,\,  q_2= 0.53,\,  q_3=0.521,\, q_4 = 0.52,\,  q_5= 0.519,
$$
then the seat allocation according to Sainte-Lagu\"e is:
 $$m_1 =  64,\,      m_2=m_3=m_4=m_5=1 $$
whereas for a slightly different election result for the 5 parties, namely
 $$q_1 =  66.075,\, q_2=0.485,\, q_3=  0.481,\, q_4=0.48 \mbox{  and }
q_5 = 0.479$$ the seat allocation according to Sainte-Lagu\"e is now:
 $$m_1  = 68,\,  m_2 =  m_3=m_4 =  m_5 =  0.$$ A  change of  the exact
proportions by 0.165 seats enforces upon the large party a change of 4
seats. 
With Hare's  method in both cases
the seat allocation would be $m_1=66,\, m_2=m_3=1,\, m_4=m_5=0 $.

Of course  these are  extreme examples, but  even under such  conditions an
election method has to be able  to prove itself.

\subsubsection{\sc The Majority Paradox}

The Sainte-Lagu\"e method also displays the {\sc Majority
  Paradox}.  It can happen that a party obtains more than 50 \% of the
  votes but receives several seats less than 50 \% of the seats. 

For example, at a community election of a town council with $M=51$ seats the
following exact quotas might occur:
$$
\begin{array}{llll}
q_1=26; &  q_2= 7.96;&  q_3= 5.84;& q_4= 4.78;\\
 q_5= 3.72; & q_6= 1.60; & q_7= 0.56;& q_8= 0.54.
\end{array}
$$
Then the seat allocation according to Sainte-Lagu\"e is:
$$m_1= 24;\, m_2= 8;\, m_3= 6;\, m_4= 5;\,  m_5= 4;\, m_6= 2;\, m_7=
m_8 = 1. $$ 

Even though Party 1 won the absolute majority of the votes and was even
allocated 26 seats by the exact quota, it looses 2 seats through the
allocation method and therefore looses the absolute majority in the
council.

\subsubsection{\sc Vote Stability Paradox}
This example also displays the {\sc Vote Stability Paradox}. 
A slightly different election result could have been:
$$
\begin{array}{llll}
q_1= 26;& q_2= 8.03;& q_3= 7.09;& q_4= 6.12;\\
 q_5= 1.415;& q_6=1.405;& q_7= 0.472;& q_8= 0.468.
\end{array}
$$
Then the seat allocation according to Sainte-Lagu\"e is:
$$m_1= 28;\, m_2= 8;\, m_3= 7;\, m_4= 6;\, m_5= m_6 = 1;\, m_7=m_8= 0.$$
Note also a variation of 4 seats for the largest party between these two
elections, even though in both cases the number of votes for the largest
party was the same. The smaller parties dictated the outcome. Apart from
these examples, especially if a 5\% hurdle is installed, the results of
Sainte-Lagu\"e and Hare's procedure are almost always the same.  For many
Federal elections Sainte-Lagu\"e and Hare differed in the seat
allocation for the 
Lower House  (Bundestag) only in two cases. The effect of the 5\% hurdle is
much larger than the change of apportionment method. However, if the 5\%
hurdle is abolished, the effect of the apportionment methods becomes much
larger and is sometimes surprising, as the examples above point out.

Note that this cannot happen in any $\rho$-rounding method as the seat
variations for a given party for constant $M,$ $A$ and number of votes
is at most one seat.

 \subsection{The Hare method}

\subsubsection{\sc New State Paradox}
Many people think there is a paradox in the 
method of the greatest remainder. Especially, the {\sc New State
Paradox} (or ``Parteizuwachsparadox''): if a new party enters the
apportionment method without 
changing any of the original votes of the other parties, then it can
happen that the new party gets a certain number of seats but in
addition, there is a redistribution of the other parties too.  For
instance,
Pukelsheim (1989), \cite[Table~6]{Pukelsheim392}, gives the following example.
Consider parties $A, B, C$ and $D$ which each have
attained $320, 238, 79,$ and $17$ votes, respectively. If we
distribute 37 seats among parties $A, B,$ and $C$, then according to
the method of Hare, they receive $18$, $14$ and $5$ seats, respectively.
However, if we distribute $37+1$ seats among
 parties $A, B,C$, and $D$ then
they receive $19$, $14$, $4$ and $1$ seats, respectively.
This seems to be a paradox as nothing has changed in the votes for the
parties $A$, $B$, and $C$.  Nevertheless, the party $A$ took one seat
away from party $C$.

To understand this behaviour, we have to calculate the exact quotas
$q_A, q_B, q_C$ for parties $A$, $B$ and $C$,
which are 
\begin{eqnarray*}
q_A &= & 320 \cdot 37/ 637 = 18.587127,\\
q_B & = &  238 \cdot 37 / 637 = 13.824175,\\
q_C & = &  79 \cdot 37 / 637 = 4.588697.
\end{eqnarray*}

If we add the votes for party $D$ then we obtain at total of $654$
votes and the exact quotas $q'_A, q'_B, q'_C, q'_D$ for parties
$A$, $B$, $C$ and $D$ are 
\begin{eqnarray*}
q'_A &=&  320 \cdot 38/ 654 = 18.593272, \\
q'_B &=& 238 \cdot 38 / 654 = 13.828746,\\
q'_C &=&  79 \cdot 38 / 654 = 4.590214, \\
q'_D &=&  17 \cdot 38 / 654 =.987767.
\end{eqnarray*}

In our opinion this behaviour does not deserve the label \emph{paradox}
as it is easily explained. The new party $D$ changes the exact quotas
of all other parties and hence to obtain a fair apportionment
of the seats as close to the exact quotas as possible, it is only fair
that the seat allocation changes. The literature which label this behaviour a
\emph{paradox}  avoids the exact quotas like the bubonic plague.

\subsubsection{Alabama Paradox}

The  Alabama  Paradox (Mandatszuwachsparadox)  appeared  first in  the
United States.  If one increases the  number of seats from $M$ to $M'$
then it may happen that a party receives more seats when $M$ seats are
distributed than  when $M'$ are  distributed. When computing  all seat
allocations  for  the  American  House  of  Representatives  using  the
$\rho$-rounding method for $\rho =  1/2$ and the election results from
1880 and  varying the  possible seat numbers  between 275 and  350, the
chief Clark of the Census office noticed that for $M=299$ and $M'=300$
the number of  representatives for Alabama decreased from  8 to 7, see
Balinski and Young (1982), \cite[Table 5.1]{BalinskiYoung}. This
cannot happen in linear divisor methods.
 
Since the Hare method respects quotas it may happen that the exact quotas
change by increasing the number of seats. As a result the number of
seats a party receives might fall back to its lower quota. Perhaps a
good way to explain the situation is to think of the lower quotas as
the guaranteed number of seats for each party and an additional seat
as a bonus. The bonuses are then distributed according to greatest
claim. They are not guaranteed. It can thus happen that with different
number of seats the bonus seats are reallocated. 

Consider for  example three parties who received  the following votes:
$a_1=  107890192;$ $a_2=  197827864;$  and $a_3=  18986361$. Thus  the
total number  of votes is $A =  324704417.$ The following table lists
the
exact quotas and the seat allocations using Hare's and
Sainte-Lagu\"e's method for $M=94$ seats and $M=95$ seats.

$$\begin{array}{l|l|lll}
 & \mbox{Exact Quotas} & q_1=  31.2336 &  q_2 =57.2700 &  q_3=
5.4964 \\
\hline
\hline
M=94 & \mbox{Hare} & m_1= 31 & m_2=57 & m_3= 6 \\
     & \mbox{Sainte-Lagu\"e} & m_1=31&  m_2=57& m_3= 6 \\
\hline
\hline
M = 95& \mbox{Hare} & m_1= 32& m_2=58& m_3= 5 \\
     & \mbox{Sainte-Lagu\"e} & m_1=31&  m_2=58& m_3= 6 \\
  \end{array}$$

While at a superficial glance it seems unfair that the third party
should have a seat less when more seats in total are allocated, a look
at the exact quotas explains the situation.
By the addition of one seat all exact quotas increase by the same
percentage. The $\rho$-rounding method is closer to the ideal of being as
close to exact proportion as possible.

{\bf Acknowledgements}
We thank the editor and an anonymous referee for their valuable
suggestions and  Mary Niemeyer for helpful comments.


\begin{thebibliography}{99}
\footnotesize
\bibitem{BalinskiRamirez99}
Michel Balinski and Victoriano Ram\'irez,
{Parametric methods of apportionment, rounding and production},
\emph{Mathem. Soc. Sciences }, {\bf 37} (2), {107--122}, 1999.
\bibitem{BalinskiYoung} Michel L. Balinski and H. Peyton Young, \emph{Fair
    Representation}, Yale University Press, 1982.
\bibitem{Birkhoff76}
Garrett Birkhoff, {House monotone apportionment schemes},
\emph{Proc. Nat. Acad. Sci. U.S.A.},
    {\bf 73}(3), 684--686, 1976.
\bibitem{GaffkePukelsheim}
N. Gaffke and F. Pukelsheim,
{Vector and matrix apportionment problems and separable convex integer 
optimization}, \emph{Math. Meth.  Oper. Res.}, 
{\bf 67},  133-159, 2008.
\bibitem{Heinrichetal05}
Lothar Heinrich, Friedrich Pukelsheim and
Udo              Schwingenschl{\"o}gl,
{On stationary multiplier methods for the rounding of
              probabilities and the limiting law of the {S}ainte-{L}agu\"e
              divergence},
\emph{Statist. Decisions},
{\bf(23)} 2,  {117--129}, 2005.
\bibitem{Hunt} E.V. Huntington, {The apportionment of representatives
  in congress}, \emph{Trans. Amer. Math. Soc.} {\bf (30)},  85--110, 1928.
\bibitem{Kop} K. Kopfermann, \emph{Mathematische Aspekte der 
Wahlverfahren} BI-Wiss.Verlag, 1991. 
\bibitem{NiemeyerWolf84}
H. Niemeyer and G. Wolf, 
 {\"{U}ber einige mathematische {A}spekte bei {W}ahlverfahren},
\emph{Z. Angew. Math. Mech.},
{\bf 64}(5), {340--343},    1984.
\bibitem{Polyab} G.P\'olya,  \"Uber die Verteilungssysteme der
Proportionalwahl, \emph{Zeitschrift f\"ur schweizerische Statistik und
Volkswirtschaft}  {\bf (54)}, Bern 1918.   
\bibitem{Polyaa} G. P\'olya, Sur la repr\'esentation proportionnelle en
  mati\'ere \'electorale, \emph{Enseignement Math\'ematiques},
{\bf 20}, (1919), 355--379. Reprinted in George P\'olya, Collected
Papers, Volume IV, Ed. Gian-Carlo Rota,  32 - 56, 1984.
\bibitem{Pukelsheim392} F.\ Pukelsheim,
{Divisor oder Quote? Zur Mathematik von Manatszuteilungen bei
  Verh\"altniswahlen}, \emph{Report $392$, Universit\"at Augsburg}, 1998.
\bibitem{Pukelsheim06}
Friedrich Pukelsheim,
Current issues of apportionment problems. \emph{Mathematics and
Democracy - Recent Advances in Voting Systems and Collective Choice}
(Eds. B. Simeone, F. Pukelsheim), Springer, 167-176, 2006.
\bibitem{SchwingenPukel06}
Udo Schwingenschl\"ogl and Friedrich Pukelsheim,
Seat biases in proportional representation
systems with thresholds, \emph{Soc. Choice Welfare} {\bf (27)},
 189-193, 2006.
\bibitem{niedersachsen}
www.wahlrecht.de/landtage/niedersachsen.htm
\bibitem{bundestag}
www.wahlrecht.de/verfahren/hare-niemeyer.html
\end{thebibliography}
\end{document}